\documentclass[11pt]{amsart}

\usepackage[margin=1.1in]{geometry}
\usepackage{amsmath,amssymb,amsthm,mathtools}
\usepackage{hyperref}
\usepackage{orcidlink}

\usepackage{mathpazo}
\newtheorem{theorem}{Theorem}
\newtheorem{lemma}[theorem]{Lemma}
\newtheorem{proposition}[theorem]{Proposition}
\newtheorem{corollary}[theorem]{Corollary}
\newtheorem{conjecture}[theorem]{Conjecture}
\newtheorem{remark}[theorem]{Remark}
\theoremstyle{definition}

\DeclareMathOperator{\Tr}{Tr}
\DeclareMathOperator{\ESD}{ESD}
\DeclareMathOperator{\Var}{Var}
\newcommand{\C}{\mathbb{C}}

\newcommand{\E}{\mathbb{E}}

\newcommand{\eur}{\mathrm{e}}

\title[Free convolution and Erlang moments]{Free multiplicative convolution and Erlang moments in monitored quantum transport}
\author[J.H. Lee]{Joon Hyung Lee\,\orcidlink{0000-0001-8628-9922}}
\thanks{LIACS, Leiden University. Supported by the Dutch Research Council (NWO) under the project \emph{Boosting the Search for New Quantum Algorithms with AI} (BoostQA), file number NGF.1623.23.033, research programme Quantum Technologie 2023.}
\address{Leiden Institute of Advanced Computer Science (LIACS), Leiden University,
         Einsteinweg 55, 2333 CC Leiden, The Netherlands}
\email{j.h.lee@liacs.leidenuniv.nl}
\date{\today}

\subjclass[2020]{60B20, 46L54, 81P45}
\keywords{Free probability, monitored quantum transport, free multiplicative convolution,
transmission eigenvalues, Erlang polynomials}

\begin{document}

\begin{abstract}
We study the transmission eigenvalues of monitored Haar products
\[
  B_L=(PS_L)(PS_{L-1})\cdots(PS_1),
\]
where the $S_i$ are independent Haar unitaries and $P$ is a deterministic
projection. For fixed $L$, we prove that the empirical eigenvalue distribution
of $B_L^\dagger B_L$ converges to $\nu_c^{\boxtimes L}$, where
$\nu_c=(1-c)\delta_1+c\delta_0$. We then take the free small-loss limit
and identify the limiting law by
\[
  S_{\mu_\tau}(z)=\exp\left(\frac{\tau}{1+z}\right).
\]
Lagrange inversion gives explicit Erlang-type moments, explaining the
polynomials appearing in Beenakker's recursion. We also record spectral
consequences, including the atom $\mu_\tau(\{1\})=(1-\tau)_+$ and the real
branch point $\tau \mathrm{e}^{1-\tau}$, and formulate the diagonal scaling
$L\sim\tau N$, $c=1/N$, as a quantitative convergence problem supported by
low-order moment checks.
\end{abstract}
\maketitle

%-----------------------------------------------------------------------
\section{Introduction}
%-----------------------------------------------------------------------

\subsection{Background}

Products of random matrices arise naturally in quantum transport, where
successive scattering events are interspersed with measurements or losses.
In the monitored quantum transport model of Beenakker and Chen
\cite{Beenakker2501,Beenakker2504}, one considers a chain of Kraus operators
\[
  B_L=(PS_L)(PS_{L-1})\cdots(PS_1),
\]
where \(S_1,\ldots,S_L\) are independent Haar-distributed unitary matrices
in \(U(N)\), and \(P\) is a fixed orthogonal projection. The eigenvalues of
\[
  B_L^\dagger B_L
\]
are the transmission eigenvalues of the monitored system. They determine the
full counting statistics of transferred charge.

Beenakker and Chen studied the averaged transmission moments
\[
  \frac{1}{N}\mathbb E\operatorname{Tr}(B_L^\dagger B_L)^p .
\]
In the monitored scaling, these moments satisfy a recursion whose solution
involves finite exponential sums of the same type that appear in the Erlang B
formula
\[
  E(p,\tau)=
  \frac{\tau^p/p!}{\sum_{k=0}^p \tau^k/k!}.
\]
The goal of this paper is to explain the appearance of these finite sums here
from free multiplicative convolution.

\subsection{Main idea}

For a single monitored step, the matrix
\[
  (PS)^\dagger(PS)=S^\dagger P S
\]
is unitarily conjugate to $P$. Hence its empirical eigenvalue distribution is
deterministic:
\[
  \nu_c=(1-c)\delta_1+c\delta_0,
\]
where $c$ is the fraction of monitored modes.

For a fixed number $L$ of steps, independent Haar rotations make the factors
asymptotically free as $N\to\infty$. Consequently,
\[
  \ESD(B_L^\dagger B_L)
  \xrightarrow{\mathrm{prob}}
  \nu_c^{\boxtimes L}.
\]
The $S$-transform of $\nu_c$ is
\[
  S_{\nu_c}(z)=\frac{1+z}{1-c+z},
\]
so
\[
  S_{\nu_c^{\boxtimes L}}(z)
  =
  \left(\frac{1+z}{1-c+z}\right)^L.
\]

Now take the small-loss free-probability limit
\[
  c=\frac{\tau}{n},
  \qquad
  n\to\infty.
\]
Then
\[
  \left(\frac{1+z}{1-\tau/n+z}\right)^n
  \longrightarrow
  \exp\left(\frac{\tau}{1+z}\right).
\]
This yields a compactly supported probability measure $\mu_\tau$ on $[0,1]$
characterized by
\[
  S_{\mu_\tau}(z)
  =
  \exp\left(\frac{\tau}{1+z}\right).
\]
The family $(\mu_\tau)_{\tau\ge 0}$ is a free multiplicative convolution
semigroup:
\[
  \mu_{\tau_1}\boxtimes\mu_{\tau_2}
  =
  \mu_{\tau_1+\tau_2}.
\]

\subsection{Results}

The paper proves two main statements and formulates a precise diagonal
random-matrix problem.

First, for every fixed $L$, the monitored Haar product converges to an
$L$-fold free multiplicative convolution:
\[
  \ESD(B_L^\dagger B_L)
  \xrightarrow{\mathrm{prob}}
  \nu_c^{\boxtimes L}.
\]

Second, the small-loss free-probability limit exists and is governed by
\[
  S_{\mu_\tau}(z)=\exp\left(\frac{\tau}{1+z}\right).
\]
This limit is the free multiplicative analogue of a law of small monitored
losses.

Third, we formulate the diagonal monitored-transport scaling
\[
  L\sim \tau N,
  \qquad
  c=\frac1N.
\]
The free-probability law $\mu_\tau$ predicts the limiting transmission
eigenvalue distribution in this regime. We prove the first two diagonal moment
checks and derive the third-moment recursion, which shows why products of
normalized traces enter the general problem. This final part is presented as
evidence for, and a precise formulation of, the remaining diagonal convergence
question.

\subsection{Spectral structure}
Here and below, $x_+=\max\{x,0\}$.
The limiting law $\mu_\tau$ has an atom at $1$ of weight
\[
  (1-\tau)_+.
\]
The mass away from this atom is
\[
  \min\{1,\tau\}.
\]
The Stieltjes-transform equation has a real branch point at
\[
  x_*(\tau)=\tau \eur^{1-\tau}.
\]
Thus the law undergoes a transition at $\tau=1$: for $0<\tau<1$, an atom
remains at perfect transmission $x=1$; at $\tau=1$, this atom disappears and
the branch point reaches $1$; for $\tau>1$, there is no atom at $1$.

\subsection{Erlang polynomials}

The moment generating series $\psi_\tau(z)=\sum_{p\ge 1}m_p(\tau)z^p$ has
inverse
\[
  \psi_\tau^{-1}(w)
  =
  \frac{w}{1+w}
  \exp\left(\frac{\tau}{1+w}\right).
\]
This inverse series has a Laguerre expansion. Applying Lagrange inversion
gives the explicit moments
\[
  m_p(\tau)
  =
  \eur^{-p\tau}
  \sum_{k=0}^{p-1}
  \left(1-\frac{k}{p}\right)
  \frac{(p\tau)^k}{k!}.
\]
Equivalently,
\[
  m_p(\tau)
  =
  \eur^{-p\tau}\frac{G_p(\tau)}{(p-1)!},
\]
where $G_p$ is the Erlang-type polynomial
\[
  G_p(\tau)
  =
  (p-1)!
  \sum_{k=0}^{p-1}
  \left(1-\frac{k}{p}\right)
  \frac{(p\tau)^k}{k!}.
\]
This gives a free-probabilistic explanation for the Erlang structure in
Beenakker's moment recursion.

\subsection{Charge statistics}

The full counting statistics are determined by
\[
  \int_0^1
  \log\left(1+(\eur^{i\theta}-1)t\right)
  \,d\mu_\tau(t).
\]
The first two cumulants per channel are
\[
  \kappa_1(\tau)=\eur^{-\tau},
\]
and
\[
  \kappa_2(\tau)=\left(\eur^\tau-1-\tau\right)\eur^{-2\tau}.
\]
Therefore the Fano factor is
\[
  F(\tau)=\frac{\kappa_2(\tau)}{\kappa_1(\tau)}
  =
  1-(1+\tau)\eur^{-\tau}.
\]
In particular,
\[
  F(1)=1-\frac{2}{\eur}
\]
at the critical monitoring strength.

\subsection{Organization}

Section~\ref{sec:setup} defines the model and states the main results.
Section~\ref{sec:fixedL-proof} proves the fixed-$L$ random matrix limit.
Section~\ref{sec:free-small-loss} proves the free small-loss limit and the
semigroup property. Section~\ref{sec:erlang} derives the Erlang moment formula
by Lagrange inversion. Section~\ref{sec:spectral} records the spectral
consequences used here, and Section~\ref{sec:cumulants} derives the
charge-transfer cumulants. Section~\ref{sec:diagonal} returns to the diagonal
$L=O(N)$ random-matrix problem and records low-order evidence for the
conjectural limit.

%-----------------------------------------------------------------------
\section{Setup and main results}\label{sec:setup}
%-----------------------------------------------------------------------

Fix $N\ge 1$. Let $S_1,\ldots,S_L$ be independent Haar-distributed unitary
matrices in $U(N)$. Let $P$ be a deterministic orthogonal projection of rank
$M=(1-c)N$, where $c\in(0,1)$ and $M$ is assumed integral. Define
\[
  B_L=(PS_L)(PS_{L-1})\cdots(PS_1).
\]

Let
\[
  T_1^{(N,L)},\ldots,T_N^{(N,L)}
\]
denote the eigenvalues of the positive semidefinite matrix $B_L^\dagger B_L$.
Equivalently, the transmission eigenvalues are the squared singular values of
$B_L$:
\[
  T_k^{(N,L)}=\sigma_k(B_L)^2.
\]
We call these numbers the transmission eigenvalues. They lie in $[0,1]$,
since each factor $PS_i$ is a contraction.

The empirical transmission eigenvalue distribution is
\[
  \mu_{N,L}
  =
  \frac1N\sum_{k=1}^N\delta_{T_k^{(N,L)}}
  =
  \ESD(B_L^\dagger B_L).
\]

Let
\[
  \nu_c=(1-c)\delta_1+c\delta_0.
\]

\begin{theorem}[Fixed-length monitored chains]\label{thm:fixedL}
Fix $L\ge 1$ and $c\in(0,1)$. Then, as $N\to\infty$,
\[
  \ESD(B_L^\dagger B_L)
  \xrightarrow{\mathrm{prob}}
  \nu_c^{\boxtimes L}.
\]
In particular,
\[
  S_{\nu_c^{\boxtimes L}}(z)
  =
  \left(\frac{1+z}{1-c+z}\right)^L.
\]
\end{theorem}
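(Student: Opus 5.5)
The plan is to reduce $B_L^\dagger B_L$ to a product of conjugated copies of $P$ and then apply asymptotic freeness of Haar-rotated deterministic matrices. Write $B_L^\dagger B_L = S_1^\dagger P S_2^\dagger P\cdots P S_L^\dagger P S_L P \cdots S_2 P S_1$. Since $P^2=P$, the nonzero spectrum can be rearranged cyclically. Set $U_k=S_kS_{k-1}\cdots S_1$; these are again independent Haar unitaries, because the map $(S_1,\dots,S_L)\mapsto(U_1,\dots,U_L)$ preserves the product Haar measure. Then
\[
B_L = P U_L \, (U_{L-1}^\dagger P U_{L-1}) \cdots (U_1^\dagger P U_1)\, ,
\]
up to the factor $U_L$ on the left of the projections. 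Writing $Q_k=U_k^\dagger P U_k$, we get $B_L = U_L Q_L Q_{L-1}\cdots Q_1$, so
\[
B_L^\dagger B_L = Q_1 Q_2\cdots Q_L\cdots Q_2 Q_1 .
\]
Each $Q_k$ is a projection with $\ESD(Q_k)=\nu_c$. The $Q_k$ are conjugates of the fixed matrix $P$ by independent Haar unitaries, so by Voiculescu's asymptotic freeness theorem (in its almost-sure or in-probability form, e.g. Collins' version or the version in Mingo--Speicher) the family $(Q_1,\dots,Q_L)$ is asymptotically free with respect to $\frac1N\Tr$. Each $Q_k$ has limit distribution $\nu_c$.

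Next I identify the limiting moments. For each $p$, the quantity $\frac1N\Tr (B_L^\dagger B_L)^p$ is a fixed noncommutative polynomial in $Q_1,\dots,Q_L$. Asymptotic freeness therefore gives convergence in probability to $\varphi((q_1q_2\cdots q_L\cdots q_2q_1)^p)$, where the $q_k$ are free projections of trace $1-c$ in a tracial $W^*$-probability space. I then show by induction on $L$ that the element $a_L=q_1\cdots q_L\cdots q_1$ has distribution $\nu_c^{\boxtimes L}$. By traciality and $q_1^2=q_1$, the moments of $a_L$ equal those of $q_1^{1/2}a'q_1^{1/2}$, with $a'=q_2\cdots q_L\cdots q_2$ and $a'\ge 0$ free from $q_1$. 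The distribution of $q_1^{1/2}a'q_1^{1/2}$ is $\nu_c\boxtimes \mathrm{law}(a')$ by definition of $\boxtimes$ for positive free elements, and the induction hypothesis gives $\mathrm{law}(a')=\nu_c^{\boxtimes(L-1)}$. Everything lives in $[0,1]$, so convergence of moments in probability implies weak convergence in probability of the ESD: approximate bounded continuous functions on $[0,1]$ uniformly by polynomials (Weierstrass), using that all measures involved are supported in $[0,1]$.

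Finally I compute the $S$-transform. For $\nu_c$, the moment series is $\psi(z)=\sum_{p\ge1}(1-c)z^p=(1-c)z/(1-z)$. Solving $w=(1-c)z/(1-z)$ gives $\chi(w)=w/(1-c+w)$, and
\[
S(w)=\chi(w)\frac{1+w}{w}=\frac{1+w}{1-c+w}.
\]
Multiplicativity $S_{\mu\boxtimes\nu}=S_\mu S_\nu$, valid here since $\nu_c\neq\delta_0$, yields the $L$-th power.

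The main obstacle is the first step: justifying that the rearranged factors are conjugates by \emph{independent} Haar unitaries, and getting convergence in probability rather than only in expectation. For the latter, I would cite the known concentration of $\frac1N\Tr$ of polynomials in Haar-conjugated deterministic matrices, with variance $O(N^{-2})$ via Weingarten calculus or Gromov--Milman concentration on $U(N)$. Chebyshev's inequality then gives convergence in probability for each moment.
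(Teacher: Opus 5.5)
Your proof is correct, and it takes a different route from the paper's.

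\textbf{The paper's route.} The paper keeps the one-step recursion $A_{n+1}=PS_{n+1}A_nS_{n+1}^\dagger P$ and inducts at the random-matrix level. At each step it applies an asymptotic-freeness theorem in which one input, $A_n$, is itself random (independent of the new Haar unitary). It then has to add a conditioning argument (conditional Weingarten variance $O(N^{-2})$ plus a union bound over moments) to carry convergence in probability through the $L$ steps.

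\textbf{Your route.} Your change of variables $U_k=S_k\cdots S_1$ gives the exact identity
\[
  B_L^\dagger B_L=Q_1\cdots Q_{L-1}Q_LQ_{L-1}\cdots Q_1,
  \qquad
  Q_k=U_k^\dagger PU_k ,
\]
with the $Q_k$ conjugates of the single deterministic $P$ by i.i.d.\ Haar unitaries. A single application of Voiculescu's theorem, in its standard deterministic-matrix form, then handles all $L$ factors at once. Its almost-sure version already gives convergence in probability, so the extra concentration step you mention is optional. The induction moves into the limiting tracial algebra, where it is purely algebraic via $a_L=q_1a'q_1=q_1^{1/2}a'q_1^{1/2}$.

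\textbf{The ``main obstacle'' you flag is not one.} Conditionally on $S_1,\dots,S_{k-1}$, left-invariance of Haar measure makes $U_k=S_kU_{k-1}$ Haar. Hence $U_k$ is independent of $(U_1,\dots,U_{k-1})$, so the $U_k$ are i.i.d.\ Haar.

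\textbf{What each approach buys.} Your route avoids the random-$A_N$ variant of asymptotic freeness and its conditioning argument. It also exhibits the transmission eigenvalues exactly as those of a product of independent uniformly random projections. For $c=1/N$ these are $I-u_ku_k^\dagger$ with independent uniform $u_k$, which is precisely Beenakker's product-of-projectors form. The paper's route instead isolates the Markov recursion $C_{n+1}=PS_{n+1}C_nS_{n+1}^\dagger P$. The paper reuses that recursion in Section~\ref{sec:diagonal} to derive the finite-$N$ moment recursions in the diagonal regime.
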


\begin{theorem}[Free law of small monitored losses]\label{thm:free-small-loss}
Fix $\tau>0$. For $n>\tau$, let $c_n=\tau/n$. Then
\[
  \nu_{c_n}^{\boxtimes n}
  \xrightarrow{w}
  \mu_\tau,
\]
where $\mu_\tau$ is the compactly supported probability measure on $[0,1]$
characterized by
\[
  S_{\mu_\tau}(z)
  =
  \exp\left(\frac{\tau}{1+z}\right).
\]
Equivalently, $(\mu_\tau)_{\tau\ge 0}$ is a free multiplicative convolution
semigroup:
\[
  \mu_{\tau_1}\boxtimes\mu_{\tau_2}=\mu_{\tau_1+\tau_2},
  \qquad
  \mu_0=\delta_1.
\]
\end{theorem}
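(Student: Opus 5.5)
Our plan is to work with moments and $S$-transforms. All the measures involved live on the compact interval $[0,1]$. For such measures, weak convergence is equivalent to convergence of all moments. Moreover, the moments are determined order by order by the power series coefficients of the $S$-transform at $z=0$, since the moment series $\psi_\mu(z)=\sum_{p\ge1}m_p(\mu)z^p$ has compositional inverse
\[
  \chi_\mu(w)=\frac{w}{1+w}S_\mu(w).
\]
This holds whenever $m_1(\mu)\neq 0$, which is the case here: $m_1(\nu_c)=1-c>0$.

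\textbf{Step 1: moments along the sequence.} By multiplicativity of $S$ under $\boxtimes$ and the formula for $S_{\nu_c}$ recorded in Theorem~\ref{thm:fixedL}, we have
\[
  S_{\nu_{c_n}^{\boxtimes n}}(z)=\Bigl(1-\frac{\tau/n}{1+z}\Bigr)^{-n}.
\]
Fix $r<1$ small and $n>2\tau/(1-r)$. On the disk $|z|\le r$ the quantity $\tau/(n(1+z))$ has modulus at most $1/2$, so the principal logarithm is available and
\[
  -n\log\Bigl(1-\frac{\tau/n}{1+z}\Bigr)=\frac{\tau}{1+z}+O(1/n)
\]
uniformly on that disk. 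Hence $S_n(z)\to S_\tau(z):=\exp(\tau/(1+z))$ uniformly on $|z|\le r$. By Cauchy estimates, every Taylor coefficient of $S_n$ converges to the corresponding coefficient of $S_\tau$.

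Lagrange inversion expresses $m_p$ as a polynomial in the first $p$ Taylor coefficients of $\chi$. Concretely, $m_p$ is $\frac1p[w^{p-1}]\bigl((1+w)/S(w)\bigr)^p$, and $1/S_n$ and $1/S_\tau$ are analytic and nonvanishing near $0$, so this formula applies to both. Consequently, for every $p$ the moment $m_p(\nu_{c_n}^{\boxtimes n})$ converges to a number $m_p(\tau)$. The $m_p(\tau)$ are the coefficients of the inverse series of $\frac{w}{1+w}\exp(\tau/(1+w))$.

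\textbf{Step 2: identify a limit measure.} Each $\nu_{c_n}^{\boxtimes n}$ is a probability measure on $[0,1]$, so the family is tight. Take any weak subsequential limit $\mu$. It is supported on $[0,1]$ and its moments are the $m_p(\tau)$. Since a measure on a compact set is determined by its moments, all subsequential limits coincide, so the full sequence converges weakly to a single $\mu_\tau$. Its moment series inverts to $\frac{w}{1+w}\exp(\tau/(1+w))$, which is exactly the claimed $S$-transform; this also establishes existence of $\mu_\tau$. The case $\tau=0$ gives $S\equiv1$, hence $\mu_0=\delta_1$.

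\textbf{Step 3: semigroup.} Since $m_1(\mu_\tau)=e^{-\tau}>0$, the $S$-transforms of $\mu_{\tau_1}$ and $\mu_{\tau_2}$ are defined. Multiplicativity gives
\[
  S_{\mu_{\tau_1}\boxtimes\mu_{\tau_2}}=e^{\tau_1/(1+z)}e^{\tau_2/(1+z)}=S_{\mu_{\tau_1+\tau_2}}.
\]
Because the $S$-transform determines the moments, hence the measure, on compact support in $[0,\infty)$, this yields $\mu_{\tau_1}\boxtimes\mu_{\tau_2}=\mu_{\tau_1+\tau_2}$. Alternatively, one can pass to the limit directly in $\nu_{\tau_1/n}^{\boxtimes n}\boxtimes\nu_{\tau_2/n}^{\boxtimes n}$, using that $\boxtimes$ is weakly continuous on $[0,1]$.

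\textbf{Main obstacle.} The delicate point is justifying the move between $S$-transforms and measures. This requires checking that the formal-series relation between moments and $S$ is valid for measures whose mean is nonzero but which carry an atom at $0$. It also requires that coefficientwise convergence of the $S$ series genuinely transfers to the moments, i.e., that the Lagrange inversion step is uniform. I would handle this entirely at the level of formal power series, as in Step 1. That approach sidesteps analytic domain issues: only finitely many coefficients enter each moment, and $1/S$ stays analytic near $0$.
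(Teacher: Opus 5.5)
Your proposal is correct and follows the same overall plan as the paper: $S_{\nu_{c_n}^{\boxtimes n}}=\bigl(1-\frac{\tau/n}{1+z}\bigr)^{-n}\to\exp(\tau/(1+z))$ near $0$, transfer to moments, support in $[0,1]$ for weak convergence and existence, and multiplicativity of $S$ for the semigroup. You handle the key transfer step differently, though.

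The paper argues analytically. It shows $\psi_n^{-1}=\frac{w}{1+w}S_n\to f$ locally uniformly. Rouch\'e's theorem then gives a disc, independent of $n$, on which every $\psi_n^{-1}$ is injective with image containing a fixed disc. Hence $\psi_n\to f^{-1}$ locally uniformly, and Cauchy's formula gives convergence of coefficients.

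You argue coefficientwise. For fixed $p$, Lagrange inversion gives $m_p=\frac1p[w^{p-1}]\bigl((1+w)/S(w)\bigr)^p$, a polynomial in the first $p$ Taylor coefficients of $1/S$. So coefficient convergence of $S_n$, together with $S_n(0)\to\eur^{\tau}\neq0$, suffices, and no uniform inversion radius is needed.

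Your route is more elementary, since for each $p$ it is a finite-dimensional continuity statement. It also uses the very formula the paper later exploits in Section~\ref{sec:erlang}. The paper's route gives locally uniform convergence of the $\psi_n$ on a fixed disc directly, without an explicit inversion formula. Similarly, you get existence of $\mu_\tau$ from compactness of probability measures on $[0,1]$ rather than from the Hausdorff moment characterization; here the two are interchangeable.

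Two minor remarks. First, you assert without proof that $\nu_{c_n}^{\boxtimes n}$ is supported in $[0,1]$. The paper justifies this by realizing $\boxtimes$ as the law of $a^{1/2}ba^{1/2}$ with $a,b$ positive contractions, and you should include such a justification.

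Second, your alternative semigroup argument via weak continuity of $\boxtimes$ only shows $\nu_{\tau_1/n}^{\boxtimes n}\boxtimes\nu_{\tau_2/n}^{\boxtimes n}\to\mu_{\tau_1}\boxtimes\mu_{\tau_2}$. Identifying this limit with $\mu_{\tau_1+\tau_2}$ still requires rerunning your Steps 1--2 on the product $S$-transform, because that measure is not of the form $\nu_c^{\boxtimes m}$. Your primary argument via multiplicativity of $S$ is complete as it stands.
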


\begin{conjecture}[Diagonal monitored-transport limit]\label{conj:diagonal}
Let $L=L_N$ satisfy $L_N/N\to\tau$, and set $c_N=1/N$, so that
$L_Nc_N\to\tau$. Equivalently, $P$ has rank $N-1$. Then
\[
  \ESD(B_{L_N}^\dagger B_{L_N})
  \xrightarrow{\mathrm{prob}}
  \mu_\tau,
\]
where $\mu_\tau$ is the measure from Theorem~\ref{thm:free-small-loss}.
\end{conjecture}

Conjecture~\ref{conj:diagonal} asserts that the random matrix chain with
$O(N)$ independent monitored Haar factors is asymptotically described by the
same free multiplicative small-loss law obtained by first passing to free
probability and then taking the small-loss limit.

%-----------------------------------------------------------------------
\section{Fixed-length products and free multiplicative convolution}\label{sec:fixedL-proof}
%-----------------------------------------------------------------------

In this section we prove Theorem~\ref{thm:fixedL}. Throughout this section,
$L\ge 1$ and $c\in(0,1)$ are fixed while $N\to\infty$.

\subsection{The one-step law}

\begin{lemma}\label{lem:single-step}
Let $S\in U(N)$, and let $P$ be an orthogonal projection of rank $(1-c)N$.
Then the eigenvalues of
\[
  (PS)^\dagger(PS)=S^\dagger P S
\]
are exactly $1$ with multiplicity $(1-c)N$ and $0$ with multiplicity $cN$.
Hence
\[
  \ESD\bigl((PS)^\dagger(PS)\bigr)
  =
  \nu_c=(1-c)\delta_1+c\delta_0
\]
for every $S\in U(N)$.
\end{lemma}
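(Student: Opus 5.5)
The plan is a direct computation from two facts: $P$ is an orthogonal projection and $S$ is unitary. First I will verify the identity $(PS)^\dagger(PS)=S^\dagger P S$. Since $P^\dagger=P$ and $P^2=P$, we get
\[
  (PS)^\dagger(PS)=S^\dagger P^\dagger P S=S^\dagger P^2 S=S^\dagger P S .
\]

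Next I will use that $S^\dagger=S^{-1}$. This makes $S^\dagger P S$ unitarily similar to $P$, so the two matrices have the same characteristic polynomial. Hence they have the same eigenvalues, counted with algebraic multiplicity. Because both matrices are Hermitian, algebraic and geometric multiplicities coincide.

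It then remains to determine the spectrum of $P$ itself. Since $P^2=P$, every eigenvalue $\lambda$ satisfies $\lambda^2=\lambda$, so $\lambda\in\{0,1\}$. Since $P$ is Hermitian, it is diagonalizable, and $\C^N$ splits as the orthogonal sum $\operatorname{ran}P\oplus\ker P$. The eigenvalue $1$ therefore has multiplicity $\dim\operatorname{ran}P=\operatorname{rank}P=(1-c)N$. The eigenvalue $0$ has multiplicity $N-(1-c)N=cN$.

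Finally, I will insert these multiplicities into the definition of the empirical spectral distribution:
\[
  \ESD\bigl((PS)^\dagger(PS)\bigr)=\frac1N\bigl((1-c)N\,\delta_1+cN\,\delta_0\bigr)=\nu_c .
\]
This holds for every $S\in U(N)$, with no randomness involved.

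I do not expect a genuine obstacle. The only point needing care is that the multiplicities are integers, which is guaranteed by the standing assumption that $M=(1-c)N$ is integral.
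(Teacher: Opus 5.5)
Your proof is correct and follows the paper's route: $S^\dagger P S$ is unitarily conjugate to $P$, and the spectrum of $P$ is $1$ with multiplicity $\operatorname{rank}P$ and $0$ otherwise. You are also slightly more precise than the paper, correctly attributing the identity $(PS)^\dagger(PS)=S^\dagger P S$ to $P^\dagger P=P^2=P$ rather than to the unitarity of $S$.
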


\begin{proof}
Since $S$ is unitary,
\[
  (PS)^\dagger(PS)=S^\dagger P S.
\]
The matrix $S^\dagger P S$ is unitarily conjugate to $P$, so it has the same
eigenvalues as $P$. Since $P$ is a projection of rank $(1-c)N$, its spectrum
consists of $1$ with multiplicity $(1-c)N$ and $0$ with multiplicity $cN$.
\end{proof}

\subsection{The one-step S-transform}

\begin{lemma}\label{lem:S-transform-nuc}
The $S$-transform of
\[
  \nu_c=(1-c)\delta_1+c\delta_0
\]
is
\[
  S_{\nu_c}(z)=\frac{1+z}{1-c+z}.
\]
\end{lemma}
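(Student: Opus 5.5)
We compute directly from the definition of the $S$-transform via the moment generating function. For a probability measure $\mu$ on $[0,\infty)$ with nonzero first moment, set
\[
  \psi_\mu(u)=\int \frac{ut}{1-ut}\,d\mu(t)=\sum_{p\ge1} m_p(\mu)\,u^p .
\]
Let $\chi_\mu$ be its compositional inverse near $0$, which exists because $\psi_\mu'(0)=m_1(\mu)\neq 0$. Then
\[
  S_\mu(z)=\frac{1+z}{z}\,\chi_\mu(z).
\]
Our plan is to compute $\psi_{\nu_c}$ in closed form, invert it, and simplify.

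First, since $\nu_c=(1-c)\delta_1+c\delta_0$, the atom at $0$ contributes nothing. Every moment equals $m_p(\nu_c)=1-c$, so
\[
  \psi_{\nu_c}(u)=(1-c)\,\frac{u}{1-u}.
\]
The first moment is $1-c\neq0$ because $c\in(0,1)$, so the $S$-transform is well defined.

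Second, we invert $z=(1-c)u/(1-u)$. This gives $z(1-u)=(1-c)u$, hence
\[
  \chi_{\nu_c}(z)=u=\frac{z}{1-c+z}.
\]
This is analytic near $z=0$ with $\chi(0)=0$, so it is the correct local branch.

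Finally, substituting into the definition gives
\[
  S_{\nu_c}(z)=\frac{1+z}{z}\cdot\frac{z}{1-c+z}=\frac{1+z}{1-c+z},
\]
which is the claim.

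There is no substantive obstacle. The only point requiring care is the normalization convention for the $S$-transform: it must be the standard one of Voiculescu, with $\psi$ defined without the constant term and the factor $(1+z)/z$. As a consistency check, $S_{\delta_1}\equiv1$ is recovered at $c=0$. Also, $S_{\nu_c}(0)=1/(1-c)=1/m_1(\nu_c)$, which matches the general identity $S_\mu(0)=1/m_1(\mu)$.
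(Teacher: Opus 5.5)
Your proposal is correct and follows the paper's proof essentially line for line. You note that every moment equals $1-c$, so $\psi_{\nu_c}(u)=(1-c)u/(1-u)$; you invert this to $z/(1-c+z)$ and multiply by $(1+z)/z$, and your sanity checks ($S_{\delta_1}\equiv 1$ at $c=0$ and $S_{\nu_c}(0)=1/m_1(\nu_c)$) are correct extras.
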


\begin{proof}
For every $p\ge 1$,
\[
  m_p(\nu_c)=\int x^p\,d\nu_c(x)=1-c.
\]
Thus
\[
  \psi_{\nu_c}(z)
  =
  \sum_{p\ge 1}m_p(\nu_c)z^p
  =
  \frac{(1-c)z}{1-z}.
\]
Solving $w=\psi_{\nu_c}(z)$ for $z$ gives
\[
  \psi_{\nu_c}^{-1}(w)=\frac{w}{1-c+w}.
\]
Therefore
\[
  S_{\nu_c}(w)
  =
  \frac{1+w}{w}\psi_{\nu_c}^{-1}(w)
  =
  \frac{1+w}{1-c+w}.
\]
Renaming $w$ as $z$ gives the claim.
\end{proof}

\subsection{Induction using asymptotic freeness}

We use the standard asymptotic-freeness theorem for deterministic matrices
and independent Haar conjugates.

\begin{theorem}[Asymptotic freeness with Haar conjugation]\label{thm:haar-free}
Let $D_N$ be a deterministic positive semidefinite matrix whose empirical
eigenvalue distribution converges weakly to a compactly supported probability
measure $\nu$ on $[0,\infty)$. Let $A_N$ be a positive semidefinite random
matrix, independent of a Haar-distributed $U_N\in U(N)$, such that
\[
  \ESD(A_N)\xrightarrow{\mathrm{prob}}\mu
\]
for some compactly supported probability measure $\mu$ on $[0,\infty)$.
Assume moreover that $\|A_N\|$ and $\|D_N\|$ are uniformly bounded. Then
\[
  \ESD\bigl(A_N^{1/2}U_ND_NU_N^\dagger A_N^{1/2}\bigr)
  \xrightarrow{\mathrm{prob}}
  \mu\boxtimes\nu .
\]
\end{theorem}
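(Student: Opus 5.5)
We prove Theorem~\ref{thm:haar-free} by the method of moments, conditioning on $A_N$. Since $\|A_N\|,\|D_N\|\le K$ uniformly, all matrices involved have spectrum in a fixed compact interval $[0,K^2]$. On such an interval, weak convergence in probability of empirical distributions is equivalent to convergence in probability of every normalized moment $\frac1N\Tr(\cdot)^p$, because polynomials are dense in $C([0,K^2])$. Moreover, $\mu\boxtimes\nu$ is supported in $[0,K^2]$ and is determined by its moments. So it suffices to show, for each fixed $p$,
\[
  \frac1N\Tr\bigl(A_N U_N D_N U_N^\dagger\bigr)^p \xrightarrow{\mathrm{prob}} m_p(\mu\boxtimes\nu).
\]
Here we used cyclicity of the trace, $\Tr(A^{1/2}UDU^\dagger A^{1/2})^p=\Tr(AUDU^\dagger)^p$.

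\textbf{Conditional expectation.} Fix a realization of $A_N$, which is allowed since $A_N$ is independent of $U_N$. The pair $(A_N, D_N)$ then consists of deterministic matrices with uniformly bounded norm. We invoke the standard Voiculescu theorem in its deterministic form, which can be proved via Weingarten calculus or as in Collins, or Mingo--Speicher. It states that for deterministic $A_N,D_N$ with bounded norms,
\[
  \E_U\,\frac1N\Tr(AUDU^\dagger)^p - \Phi_p\bigl(\tfrac1N\Tr A^{j},\tfrac1N\Tr D^{j}\,:\, j\le p\bigr)=O(N^{-2}),
\]
where $\Phi_p$ is the universal polynomial giving the $p$-th moment of the free product in terms of the individual moments. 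The implied constant depends only on $p$ and the norm bound. The key point is the uniformity of this error in the realization of $A_N$, given only the norm bound. We get it from the Weingarten expansion: the error is a finite sum, over pairs of permutations in $S_p$, of subleading Weingarten terms multiplied by products of normalized traces, each bounded by $K^{p}$.

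\textbf{Concentration.} Next we show $\Var_U\bigl(\frac1N\Tr(AUDU^\dagger)^p\bigr)=O(N^{-2})$, again uniformly in the realization. We do this either by the second-order Weingarten estimate or, more simply, by Gromov--Milman concentration on $SU(N)$ (or on $U(N)$ after handling the center), using that $U\mapsto \frac1N\Tr(AUDU^\dagger)^p$ is Lipschitz with constant $O(pK^{2p}/\sqrt N)$ in Hilbert--Schmidt norm. Combining this with the previous step and Chebyshev, conditionally on $A_N$,
\[
  \frac1N\Tr(AUDU^\dagger)^p - \Phi_p(\text{moments of }A_N,D_N)\to 0
\]
in probability, uniformly, and hence unconditionally as well.

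\textbf{Conclusion.} By hypothesis, $\frac1N\Tr A_N^j\to m_j(\mu)$ in probability. This holds because $x^j$ is bounded continuous on $[0,K]$ and the ESD converges weakly in probability. Also $\frac1N\Tr D_N^j\to m_j(\nu)$ deterministically. Since $\Phi_p$ is a polynomial, hence continuous, the continuous mapping theorem gives convergence in probability of $\Phi_p$ to $\Phi_p(m(\mu),m(\nu))=m_p(\mu\boxtimes\nu)$. This completes the moment convergence.

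The main obstacle is the uniformity of the Weingarten error and the variance bound in the random, merely norm-bounded $A_N$. The fix is to track that every error term is a product of normalized traces of words in $A_N,D_N$, each bounded by a power of $K$. The fallback is to cite Speicher's or Collins' theorem, which already states the result for matrices with bounded norm.
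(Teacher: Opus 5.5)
Your proposal is correct and follows essentially the same route as the paper. The paper cites the standard result and then handles the random $A_N$ as you do: condition on $A_N$, use Weingarten calculus for the conditional mean and an $O(N^{-2})$ conditional variance, then pass to the unconditional statement via the in-probability convergence of the moments of $A_N$. Your write-up additionally makes explicit the uniformity of the Weingarten error in the realization of $A_N$, which the paper uses implicitly, and offers Gromov--Milman concentration on $SU(N)$ as an alternative to the second-order Weingarten variance bound.
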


This is the standard asymptotic-freeness theorem for independent Haar
conjugates; see, for example, \cite[Chs.~21--23]{MingoSpeicher2017} and the
Weingarten approach of \cite{CollinsSniady2006}. We use it with a
\emph{random} $A_N$ independent of $U_N$, and the conclusion is convergence
in probability of each moment $\frac1N\Tr(A_N^{1/2}U_ND_NU_N^\dagger
A_N^{1/2})^p$. This is stable under the induction of
Proposition~\ref{prop:fixed-L-convergence}, as follows. Fix $p$ and
$\varepsilon>0$. Conditionally on $A_n$, Haar averaging over $U_N$
concentrates each moment: the Weingarten calculus gives
$\Var\big(\frac1N\Tr(\,\cdot\,)^p\mid A_n\big)=O(N^{-2})$, so the conditional
moment is within $\varepsilon$ of the free value
$\int x^p\,d(\nu_c^{\boxtimes n}\boxtimes\nu_c)$ with conditional
probability $1-o(1)$ whenever the empirical moments of $A_n$ are within some
$\varepsilon'$ of those of $\nu_c^{\boxtimes n}$. By the induction
hypothesis the latter event has probability $1-o(1)$; combining the two (a
union bound over the finitely many moments $1,\ldots,p$) yields convergence
in probability at step $n+1$. As $L$ is fixed, finitely many such steps
preserve the in-probability conclusion.
\begin{proposition}\label{prop:fixed-L-convergence}
Fix $L\ge 1$ and $c\in(0,1)$. Then
\[
  \ESD(B_L^\dagger B_L)
  \xrightarrow{\mathrm{prob}}
  \nu_c^{\boxtimes L}.
\]
\end{proposition}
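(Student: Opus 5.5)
The proof will go by induction on $L$, peeling off one factor at a time and applying Theorem~\ref{thm:haar-free} at each step. The base case $L=1$ is Lemma~\ref{lem:single-step}: $B_1^\dagger B_1=S_1^\dagger P S_1$ has empirical distribution exactly $\nu_c$ for every $N$, so convergence holds trivially.

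For the inductive step, write $B_{n+1}=PS_{n+1}B_n$. Then
\[
  B_{n+1}^\dagger B_{n+1}=B_n^\dagger S_{n+1}^\dagger P S_{n+1} B_n .
\]
This has the same nonzero eigenvalues, with the same multiplicities, as
\[
  (B_nB_n^\dagger)^{1/2}S_{n+1}^\dagger P S_{n+1}(B_nB_n^\dagger)^{1/2}.
\]
The two matrices are $X^\dagger X$ and $XX^\dagger$ for $X=P S_{n+1}B_n$ suitably split. Precisely, set $Y=P^{1/2}S_{n+1}(B_nB_n^\dagger)^{1/2}$ and $V$ the polar part of $B_n$; both matrices are $N\times N$, so their full spectra agree and the ESDs coincide.

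Next observe that $B_nB_n^\dagger$ and $B_n^\dagger B_n$ share their ESD, again because both are $N\times N$. So the induction hypothesis gives $\ESD(A_N)\to\nu_c^{\boxtimes n}$ in probability for $A_N:=B_nB_n^\dagger$. The matrix $A_N$ is a function of $S_1,\dots,S_n$ only, so it is independent of $U_N:=S_{n+1}^\dagger$, which is Haar. Take $D_N=P$, whose ESD is $\nu_c$ by Lemma~\ref{lem:single-step}. Both norms are bounded by $1$, since each $PS_i$ is a contraction.

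Theorem~\ref{thm:haar-free}, in the form with random $A_N$ discussed after its statement, then yields
\[
  \ESD(A_N^{1/2}U_NPU_N^\dagger A_N^{1/2})\to\nu_c^{\boxtimes n}\boxtimes\nu_c=\nu_c^{\boxtimes(n+1)}
\]
in probability. This uses associativity of $\boxtimes$.

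The only delicate point is passing from the deterministic-$A_N$ version of asymptotic freeness to a random $A_N$ that converges only in probability. I would handle it as sketched in the remark after Theorem~\ref{thm:haar-free}. Because all measures are supported in $[0,1]$, weak convergence in probability is equivalent to convergence in probability of each moment. For fixed $p$, I would condition on $(S_1,\dots,S_n)$ and use the Weingarten estimate: the conditional mean of $\frac1N\Tr(\cdot)^p$ is a continuous function of the first $p$ moments of $A_N$ and of $P$, up to $O(N^{-2})$ errors uniform in $\|A_N\|\le1$, and the conditional variance is $O(N^{-2})$. Chebyshev's inequality plus the event on which $A_N$'s moments are close to those of $\nu_c^{\boxtimes n}$ then gives the claim. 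Finally, the $S$-transform formula follows from Lemma~\ref{lem:S-transform-nuc} and multiplicativity $S_{\mu\boxtimes\nu}=S_\mu S_\nu$; note $\nu_c\neq\delta_0$, so the $S$-transform is defined.
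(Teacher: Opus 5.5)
Your proof is correct and follows essentially the paper's route: induction on $L$, applying Theorem~\ref{thm:haar-free} with $A_N=B_nB_n^\dagger$, $D_N=P$ and $U_N=S_{n+1}^\dagger$, and handling the random $A_N$ by conditioning on the earlier factors and using the $O(N^{-2})$ Weingarten variance bound. The only difference is cosmetic: the paper runs the induction directly on $A_L=B_LB_L^\dagger$ through the recursion $A_{n+1}=PS_{n+1}A_nS_{n+1}^\dagger P$, whereas you work with $B_L^\dagger B_L$ and pass to $A_n^{1/2}S_{n+1}^\dagger PS_{n+1}A_n^{1/2}$ via the polar decomposition of $B_n$, which is equally valid.
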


\begin{proof}
Since $B_L^\dagger B_L$ and $B_LB_L^\dagger$ have the same eigenvalues, it is
enough to prove the claim for
\[
  A_L:=B_LB_L^\dagger.
\]

We prove by induction that
\[
  \ESD(A_L)
  \xrightarrow{\mathrm{prob}}
  \nu_c^{\boxtimes L}.
\]

For $L=1$,
\[
  A_1=(PS_1)(PS_1)^\dagger
  =PS_1S_1^\dagger P
  =P,
\]
so $\ESD(A_1)=\nu_c$ exactly.

Assume the claim holds for $L=n$. Since
\[
  B_{n+1}=(PS_{n+1})B_n,
\]
we have
\[
  A_{n+1}
  =
  B_{n+1}B_{n+1}^\dagger
  =
  PS_{n+1}B_nB_n^\dagger S_{n+1}^\dagger P
  =
  P S_{n+1} A_n S_{n+1}^\dagger P.
\]
The matrix $S_{n+1}A_nS_{n+1}^\dagger$ is a Haar conjugate of $A_n$,
independent of $P$. Note also that $\|A_n\|\le 1$ uniformly in $n$ and
$N$: each factor $PS_i$ is a contraction (being a product of a projection
and a unitary), hence $\|B_n\|\le 1$ and $\|A_n\|=\|B_n\|^2\le 1$.  This
uniform bound ensures that the empirical spectral distributions are
supported in $[0,1]$ and that the asymptotic-freeness theorem below
applies.  Moreover,
\[
  P S_{n+1}A_nS_{n+1}^\dagger P
\]
has the same nonzero eigenvalues as
\[
  A_n^{1/2}S_{n+1}^\dagger P S_{n+1}A_n^{1/2}.
\]
By the induction hypothesis,
\[
  \ESD(A_n)\xrightarrow{\mathrm{prob}}\nu_c^{\boxtimes n},
\]
while
\[
  \ESD(P)=\nu_c
\]
exactly. The asymptotic-freeness theorem for independent Haar conjugates
therefore gives
\[
  \ESD(A_{n+1})
  \xrightarrow{\mathrm{prob}}
  \nu_c^{\boxtimes n}\boxtimes\nu_c
  =
  \nu_c^{\boxtimes(n+1)}.
\]
This completes the induction.
\end{proof}

Combining Proposition~\ref{prop:fixed-L-convergence} with
Lemma~\ref{lem:S-transform-nuc} and multiplicativity of the $S$-transform
under free multiplicative convolution
\cite{MingoSpeicher2017,MergnyPotters2020,Husson2021},
we obtain
\[
  S_{\nu_c^{\boxtimes L}}(z)
  =
  S_{\nu_c}(z)^L
  =
  \left(\frac{1+z}{1-c+z}\right)^L,
\]
which proves Theorem~\ref{thm:fixedL}.

%-----------------------------------------------------------------------
\section{The free small-loss limit}\label{sec:free-small-loss}
%-----------------------------------------------------------------------

In this section we prove Theorem~\ref{thm:free-small-loss}. The argument is
purely free-probabilistic and does not use random matrices.

Recall that
\[
  \nu_c=(1-c)\delta_1+c\delta_0.
\]
By Lemma~\ref{lem:S-transform-nuc},
\[
  S_{\nu_c}(z)=\frac{1+z}{1-c+z}.
\]
Therefore, for $c_n=\tau/n$ and $n>\tau$,
\[
  S_{\nu_{c_n}^{\boxtimes n}}(z)
  =
  \left(\frac{1+z}{1-\tau/n+z}\right)^n.
\]

\begin{proposition}\label{prop:S-transform-limit}
For every $\tau>0$,
\[
  \left(\frac{1+z}{1-\tau/n+z}\right)^n
  \longrightarrow
  \exp\left(\frac{\tau}{1+z}\right)
\]
locally uniformly on $\C\setminus(-\infty,-1]$.
\end{proposition}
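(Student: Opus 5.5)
Write the $n$-th factor as
\[
  \frac{1+z}{1-\tau/n+z}=\Bigl(1-\frac{\tau}{n(1+z)}\Bigr)^{-1},
\]
so the $n$-th power equals $\exp\bigl(-n\log(1-w_n(z))\bigr)$ with $w_n(z)=\tau/(n(1+z))$. Here $\log$ is the principal branch, which is legitimate as soon as $|w_n(z)|<1$. On $\C\setminus(-\infty,-1]$ the function $1+z$ never vanishes, so $w_n(z)$ is holomorphic there. The plan is to show $n\log(1-w_n)\to -\tau/(1+z)$ uniformly on compacts and then exponentiate.

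Fix a compact $K\subset\C\setminus(-\infty,-1]$. Since $-1\notin K$, we have $\delta_K:=\min_{z\in K}|1+z|>0$, and hence $|w_n(z)|\le \tau/(n\delta_K)$ on $K$. For $n\ge 2\tau/\delta_K$ this gives $|w_n|\le 1/2$. On that range the principal logarithm expansion gives
\[
  \bigl|\log(1-w)+w\bigr|\le \sum_{k\ge2}\frac{|w|^k}{k}\le |w|^2 .
\]
Therefore
\[
  \Bigl|n\log(1-w_n(z))+\frac{\tau}{1+z}\Bigr|\le n|w_n(z)|^2\le \frac{\tau^2}{n\delta_K^2},
\]
uniformly for $z\in K$.

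To conclude, note that $-\tau/(1+z)$ is bounded on $K$, by $\tau/\delta_K$. The exponential is uniformly continuous on bounded sets, so
\[
  \Bigl(\frac{1+z}{1-\tau/n+z}\Bigr)^n=\exp\bigl(-n\log(1-w_n)\bigr)\to \exp\Bigl(\frac{\tau}{1+z}\Bigr)
\]
uniformly on $K$. Concretely, $|e^a-e^b|\le e^{\max(|a|,|b|)}|a-b|$ gives an $O(1/n)$ rate.

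One point of care is that $\exp(-n\log(1-w_n))$ really equals the $n$-th power of $(1-w_n)^{-1}$. This holds for any choice of logarithm, since $e^{-n\log u}=u^{-n}$ whenever $e^{\log u}=u$, so no branch ambiguity arises.

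The only real obstacle is this uniform control of $|1+z|$ away from zero on compacts, which is what forces the domain $\C\setminus(-\infty,-1]$. In fact the argument works on all of $\C\setminus\{-1\}$; the slit is kept only because it is the natural domain for the $S$-transforms used afterwards.
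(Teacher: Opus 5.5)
Your proof is correct and follows the paper's argument essentially step for step: bound $|1+z|$ below by $\delta_K$ on the compact set, rewrite the factor as $\bigl(1-\tau/(n(1+z))\bigr)^{-1}$, use $-\log(1-w)=w+O(|w|^2)$ for $|w|\le 1/2$ to get a uniform $O(1/n)$ error, and exponentiate. Your explicit constants, and your remark that $\exp(-n\log u)=u^{-n}$ involves no branch ambiguity, simply spell out the paper's $O(\cdot)$ steps.
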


\begin{proof}
Let $K\subset \C\setminus(-\infty,-1]$ be compact. Then there is $\delta>0$
such that
\[
  |1+z|\ge \delta
\]
for all $z\in K$. For $n$ sufficiently large, $\tau/n<\delta/2$, and
therefore
\[
  |1-\tau/n+z|\ge \delta/2.
\]
Now
\[
  \log\left(\frac{1+z}{1-\tau/n+z}\right)
  =
  -\log\left(1-\frac{\tau}{n(1+z)}\right).
\]
Using $-\log(1-w)=w+O(w^2)$ uniformly for $|w|\le 1/2$, we obtain
\[
  n\log\left(\frac{1+z}{1-\tau/n+z}\right)
  =
  \frac{\tau}{1+z}+O\left(\frac1n\right),
\]
uniformly on $K$. Exponentiating gives the claimed locally uniform
convergence.
\end{proof}

\begin{proof}[Proof of Theorem~\ref{thm:free-small-loss}]
The $S$-transform of $\nu_{c_n}^{\boxtimes n}$ is
\[
  S_{\nu_{c_n}^{\boxtimes n}}(z)
  =
  \left(\frac{1+z}{1-\tau/n+z}\right)^n.
\]
By Proposition~\ref{prop:S-transform-limit}, this converges locally uniformly
to
\[
  S(z)=\exp\left(\frac{\tau}{1+z}\right).
\]
Equivalently, the inverse moment series
\[
  \psi_n^{-1}(w)=\frac{w}{1+w}S_{\nu_{c_n}^{\boxtimes n}}(w)
\]
converge locally uniformly near $0$ to
\[
  f(w)=\frac{w}{1+w}\exp\left(\frac{\tau}{1+w}\right).
\]
Since $f(0)=0$ and $f'(0)=e^\tau\ne 0$, the inversion can be carried out
uniformly in $n$, as follows.  Because $\psi_n^{-1}\to f$ locally
uniformly and $f'(0)\ne 0$, Rouch\'e's theorem provides radii
$\rho,r>0$, independent of $n$ for $n$ large, such that each
$\psi_n^{-1}$ is injective on the disc $D(0,\rho)$ and its image contains
$D(0,r)$.  The local inverses $\psi_n$ are therefore all defined on the
common disc $D(0,r)$, and locally uniform convergence
$\psi_n^{-1}\to f$ on $D(0,\rho)$ upgrades to locally uniform convergence
$\psi_n\to\psi_\tau\coloneqq f^{-1}$ on $D(0,r)$.  By Cauchy's integral
formula for Taylor coefficients, every coefficient of $\psi_n$ converges
to the corresponding coefficient of $\psi_\tau$.

The measures $\nu_{\tau/n}^{\boxtimes n}$ are supported on $[0,1]$: free
multiplicative convolution can be realized as the law of $a^{1/2}ba^{1/2}$,
and positive contractions remain positive contractions under this operation.
Thus moment convergence implies weak convergence. Since each coefficient of
$\psi_\tau$ is a limit of moments of probability measures supported on
$[0,1]$, the coefficients form a Hausdorff moment sequence. Hence there is a
unique probability measure $\mu_\tau$ on $[0,1]$ with moment series
$\psi_\tau$, and by construction its $S$-transform is $S$.
\end{proof}

\begin{corollary}[Free multiplicative convolution semigroup]\label{cor:semigroup}
The family $(\mu_\tau)_{\tau\ge 0}$ satisfies
\[
  \mu_0=\delta_1,
  \qquad
  \mu_{\tau_1}\boxtimes\mu_{\tau_2}=\mu_{\tau_1+\tau_2}.
\]
\end{corollary}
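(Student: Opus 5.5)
The plan is to work entirely at the level of $S$-transforms, using that a compactly supported probability measure on $[0,\infty)$ with nonzero mean is determined by its $S$-transform, and that $S$-transforms multiply under $\boxtimes$. First, $\mu_0=\delta_1$. Indeed, $S_{\mu_0}(z)=\exp(0)=1$. On the other hand, $\delta_1$ has all moments equal to $1$, so $\psi_{\delta_1}(z)=z/(1-z)$ and $\psi_{\delta_1}^{-1}(w)=w/(1+w)$. This gives $S_{\delta_1}\equiv 1$. Alternatively, for $\tau=0$ one has $c_n=0$ and $\nu_0=\delta_1$, so the construction of Theorem~\ref{thm:free-small-loss} trivially produces $\delta_1$.

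For the semigroup law, let $\tau_1,\tau_2\ge 0$. Both $\mu_{\tau_1}$ and $\mu_{\tau_2}$ are probability measures on $[0,1]$. Their first moments are nonzero: the coefficient of $z$ in $\psi_\tau=f^{-1}$ is $1/f'(0)=\eur^{-\tau}$. Hence their $S$-transforms are defined as analytic functions near $0$. Multiplicativity of the $S$-transform \cite{MingoSpeicher2017} then gives
\[
  S_{\mu_{\tau_1}\boxtimes\mu_{\tau_2}}(z)
  =\exp\Bigl(\frac{\tau_1}{1+z}\Bigr)\exp\Bigl(\frac{\tau_2}{1+z}\Bigr)
  =\exp\Bigl(\frac{\tau_1+\tau_2}{1+z}\Bigr)=S_{\mu_{\tau_1+\tau_2}}(z)
\]
in a neighbourhood of $0$. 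Moreover, $\mu_{\tau_1}\boxtimes\mu_{\tau_2}$ is again supported in $[0,1]$, realized as the law of $a^{1/2}ba^{1/2}$ with $a,b$ free positive contractions.

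The final step is uniqueness, which is the only point needing care. Equality of $S$-transforms near $0$ gives equality of $\psi^{-1}(w)=\frac{w}{1+w}S(w)$ as germs at $0$. Since $(\psi^{-1})'(0)\neq 0$, this yields equality of the moment series $\psi$ as formal or convergent power series. Compactly supported measures are determined by their moments, so $\mu_{\tau_1}\boxtimes\mu_{\tau_2}=\mu_{\tau_1+\tau_2}$.

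As a cross-check, I could instead argue through the approximations. By associativity, $\nu_{c}^{\boxtimes n_1}\boxtimes\nu_{c}^{\boxtimes n_2}=\nu_c^{\boxtimes(n_1+n_2)}$. Taking $c=1/n$ with $n_i\approx \tau_i n$ and using weak continuity of $\boxtimes$ on measures supported in $[0,1]$ would give the same conclusion. However, the $S$-transform route avoids continuity issues and is the one I would write. I expect no real obstacle beyond confirming that the first moment is nonzero, so that the $S$-transform is well defined. This follows from $m_1(\mu_\tau)=\eur^{-\tau}>0$.
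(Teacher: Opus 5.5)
Your proposal is correct and follows the paper's proof: you multiply the $S$-transforms to get $\exp\bigl((\tau_1+\tau_2)/(1+z)\bigr)$, invoke the fact that the $S$-transform determines a compactly supported measure, and identify $S\equiv 1$ with $\delta_1$. You also make explicit two points the paper leaves implicit: the nonvanishing first moment $m_1=\eur^{-\tau}$, which is needed for the $S$-transform to be defined, and the germ-inversion argument from equal $S$-transforms to equal moment series.
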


\begin{proof}
Since
\[
  S_{\mu_\tau}(z)=\exp\left(\frac{\tau}{1+z}\right),
\]
we have
\[
  S_{\mu_{\tau_1}}(z)S_{\mu_{\tau_2}}(z)
  =
  \exp\left(\frac{\tau_1+\tau_2}{1+z}\right)
  =
  S_{\mu_{\tau_1+\tau_2}}(z).
\]
Because the $S$-transform determines the compactly supported probability
measure, this proves
\[
  \mu_{\tau_1}\boxtimes\mu_{\tau_2}=\mu_{\tau_1+\tau_2}.
\]
Also $S_{\mu_0}(z)=1$, which is the $S$-transform of $\delta_1$.
\end{proof}

\begin{remark}
The function $1/(1+z)$ is the $S$-transform of the Marchenko--Pastur law with
parameter $1$. Thus the semigroup $(\mu_\tau)$ may be viewed as the
multiplicative free-probability analogue of a Poisson small-jump limit:
\[
  \nu_{\tau/n}^{\boxtimes n}\Rightarrow\mu_\tau.
\]
We use the phrase ``free multiplicative small-loss law'' in this precise
sense.
\end{remark}

%-----------------------------------------------------------------------
\section{Laguerre polynomials and Lagrange inversion}\label{sec:erlang}
%-----------------------------------------------------------------------

We now derive the moment formula for $\mu_\tau$ from the $S$-transform
\[
  S_{\mu_\tau}(z)=\exp\left(\frac{\tau}{1+z}\right).
\]

Let
\[
  \psi_\tau(z)=\sum_{p\ge 1}m_p(\tau)z^p
\]
be the moment generating series of $\mu_\tau$. Since
\[
  S_{\mu_\tau}(w)=\frac{1+w}{w}\psi_\tau^{-1}(w),
\]
we have
\[
  \psi_\tau^{-1}(w)
  =
  \frac{w}{1+w}
  \exp\left(\frac{\tau}{1+w}\right).
\]
Equivalently,
\[
  \psi_\tau^{-1}(w)
  =
  \eur^\tau w
  \frac{\exp\left(-\frac{\tau w}{1+w}\right)}{1+w}.
\]

Using the standard generating function for the Laguerre polynomials,
\[
  \sum_{n=0}^{\infty}L_n(x)t^n
  =
  \frac{1}{1-t}
  \exp\left(-\frac{xt}{1-t}\right),
\]
and setting $x=-\tau$, $t=-w$, we obtain
\[
  \frac{\exp\left(-\frac{\tau w}{1+w}\right)}{1+w}
  =
  \sum_{n=0}^{\infty}(-1)^nL_n(-\tau)w^n.
\]
Thus
\[
  \psi_\tau^{-1}(w)
  =
  \eur^\tau w
  \sum_{n=0}^{\infty}(-1)^nL_n(-\tau)w^n.
\]

This Laguerre expansion describes the inverse moment series. To obtain the
moments themselves, we apply Lagrange inversion.

Write
\[
  z=\psi_\tau^{-1}(w)=w\varphi(w),
  \qquad
  \varphi(w)=\frac{1}{1+w}\exp\left(\frac{\tau}{1+w}\right).
\]
Then $w=\psi_\tau(z)$, and Lagrange inversion gives
\[
  m_p(\tau)
  =
  [z^p]\psi_\tau(z)
  =
  \frac1p[u^{p-1}]\varphi(u)^{-p}.
\]
Since
\[
  \varphi(u)^{-p}
  =
  (1+u)^p
  \exp\left(-\frac{p\tau}{1+u}\right),
\]
we get
\[
  m_p(\tau)
  =
  \frac1p[u^{p-1}]
  (1+u)^p
  \exp\left(-\frac{p\tau}{1+u}\right).
\]
Writing
\[
  \exp\left(-\frac{p\tau}{1+u}\right)
  =
  e^{-p\tau}
  \exp\left(\frac{p\tau u}{1+u}\right),
\]
we obtain
\[
  m_p(\tau)
  =
  \frac{\eur^{-p\tau}}p
  [u^{p-1}]
  (1+u)^p
  \exp\left(\frac{p\tau u}{1+u}\right).
\]
Expanding the exponential,
\[
  \exp\left(\frac{p\tau u}{1+u}\right)
  =
  \sum_{k=0}^{\infty}
  \frac{(p\tau)^k}{k!}
  \frac{u^k}{(1+u)^k}.
\]
Therefore
\[
  (1+u)^p
  \exp\left(\frac{p\tau u}{1+u}\right)
  =
  \sum_{k=0}^{\infty}
  \frac{(p\tau)^k}{k!}
  u^k(1+u)^{p-k}.
\]
Only $0\le k\le p-1$ contributes to the coefficient of $u^{p-1}$, and
\[
  [u^{p-1}]u^k(1+u)^{p-k}
  =
  [u^{p-1-k}](1+u)^{p-k}
  =
  p-k.
\]
Hence
\begin{equation}\label{eq:moment-formula}
  m_p(\tau)
  =
  \eur^{-p\tau}
  \sum_{k=0}^{p-1}
  \left(1-\frac{k}{p}\right)
  \frac{(p\tau)^k}{k!}.
\end{equation}

Equivalently,
\[
  m_p(\tau)=\eur^{-p\tau}\frac{G_p(\tau)}{(p-1)!},
\]
where
\[
  G_p(\tau)
  =
  (p-1)!
  \sum_{k=0}^{p-1}
  \left(1-\frac{k}{p}\right)
  \frac{(p\tau)^k}{k!}.
\]
This is the Erlang-type polynomial appearing in Beenakker's moment recursion.
Thus the Erlang structure comes from Lagrange inversion applied to the
Laguerre expansion of the inverse moment series.

For the first few values of $p$, the formula gives
\[
  m_1(\tau)=\eur^{-\tau},
\]
\[
  m_2(\tau)=(1+\tau)\eur^{-2\tau},
\]
and
\[
  m_3(\tau)
  =
  \left(1+2\tau+\frac32\tau^2\right)\eur^{-3\tau}
  =
  \frac{3\tau^2+4\tau+2}{2}\eur^{-3\tau}.
\]

%-----------------------------------------------------------------------
\section{Spectral analysis of the limiting law}\label{sec:spectral}
%-----------------------------------------------------------------------

We now analyze the probability measure $\mu_\tau$ characterized by
\[
  S_{\mu_\tau}(z)=\exp\left(\frac{\tau}{1+z}\right).
\]
Throughout this section $\mu_\tau$ denotes the limiting distribution of the
transmission eigenvalues, that is, of the squared singular values.

\subsection{The Stieltjes transform equation}

Let
\[
  G_\tau(z)=\int_0^1\frac{1}{z-x}\,d\mu_\tau(x)
\]
be the Stieltjes transform, and set
\[
  h(z)=zG_\tau(z).
\]
Using the relation between the $S$-transform and the inverse moment series,
\[
  \psi_{\mu_\tau}^{-1}(w)=\frac{w}{1+w}S_{\mu_\tau}(w),
\]
we obtain
\[
  \psi_{\mu_\tau}^{-1}(w)
  =
  \frac{w}{1+w}\exp\left(\frac{\tau}{1+w}\right).
\]
Since
\[
  h(z)=1+\psi_{\mu_\tau}(1/z),
\]
we set $w=h-1$. Then
\[
  \frac1z
  =
  \psi_{\mu_\tau}^{-1}(h-1)
  =
  \frac{h-1}{h}\exp\left(\frac{\tau}{h}\right).
\]
Equivalently,
\[
  z=\frac{h \eur^{-\tau/h}}{h-1},
  \qquad
  G_\tau(z)=\frac{h}{z}.
\]
The physical branch is the one satisfying
\[
  \operatorname{Im}G_\tau(z)<0
  \qquad
  \text{for }\operatorname{Im}z>0.
\]

\subsection{The atom at one}

For a probability measure on $[0,1]$, the mass at $1$ is recovered from the
moments by
\[
  \mu(\{1\})=\lim_{p\to\infty}\int_0^1 x^p\,d\mu(x).
\]
Using the moment formula proved in Section~\ref{sec:erlang},
\[
  m_p(\tau)
  =
  \eur^{-p\tau}
  \sum_{k=0}^{p-1}
  \left(1-\frac{k}{p}\right)
  \frac{(p\tau)^k}{k!}.
\]
Equivalently, if $K_p$ is a Poisson random variable with mean $p\tau$, then
\[
  m_p(\tau)=\E\left[\left(1-\frac{K_p}{p}\right)
  \mathbf 1_{\{K_p\le p-1\}}\right].
\]
If $0<\tau<1$, then $K_p/p\to\tau$ in probability and the upper tail
$\{K_p\ge p\}$ has exponentially small probability: by the Chernoff bound
for the Poisson distribution,
\[
  \Pr(K_p\ge p)\le \eur^{-p\,I(\tau)},
  \qquad
  I(\tau)=\tau-1-\log\tau>0
  \quad\text{for }\tau\ne 1,
\]
so $m_p(\tau)\to1-\tau$.
If $\tau>1$, the lower tail $\{K_p\le p-1\}$ has exponentially small
probability by the same bound (with the lower-tail rate function, which is
again $I(\tau)>0$ for $\tau>1$), so $m_p(\tau)\to0$. If $\tau=1$, then by
Cauchy--Schwarz (equivalently Jensen),
\[
  0\le m_p(1)\le \E\left|1-\frac{K_p}{p}\right|
  \le \left(\Var\Big(\tfrac{K_p}{p}\Big)\right)^{1/2}
  =\frac{1}{\sqrt p},
\]
using $\Var(K_p)=p$ for $K_p\sim\mathrm{Poisson}(p)$; hence $m_p(1)\to0$.
(The exact rate is $\E|1-K_p/p|\sim\sqrt{2/(\pi p)}$, but the crude bound
suffices.) Therefore
\[
  \mu_\tau(\{1\})=(1-\tau)_+.
\]
Thus $\mu_\tau$ has an atom at $1$ for $0<\tau<1$, of weight $1-\tau$, and
no atom at $1$ for $\tau\ge 1$.

\subsection{The real branch point}

The boundary values of the Stieltjes transform are described by the implicit
equation
\[
  z=\frac{h \eur^{-\tau/h}}{h-1}.
\]
A branch point of the inverse map occurs when two branches coalesce, namely
when
\[
  \frac{dz}{dh}=0.
\]
Writing
\[
  z(h)=\frac{h \eur^{-\tau/h}}{h-1},
\]
we compute
\[
  \frac{1}{z(h)}\frac{dz}{dh}
  =
  \frac1h+\frac{\tau}{h^2}-\frac1{h-1}.
\]
Thus $dz/dh=0$ if and only if
\[
  \frac1h+\frac{\tau}{h^2}-\frac1{h-1}=0.
\]
Clearing denominators gives
\[
  h(h-1)+\tau(h-1)=h^2,
\]
and therefore
\[
  (\tau-1)h=\tau.
\]
Hence the critical point is
\[
  h_*=\frac{\tau}{\tau-1}
\]
for $\tau\ne 1$. Substituting this value into $z(h)$, we get
\[
  x_*(\tau)=z(h_*)=\tau \eur^{1-\tau}.
\]
At $\tau=1$, this formula extends continuously and gives $x_*(1)=1$.

Thus the Stieltjes-transform equation has a real branch point at
\[
  x_*(\tau)=\tau \eur^{1-\tau}.
\]
We claim only that $x_*(\tau)$ is a critical point of the inverse map
$h\mapsto z(h)$, i.e.\ a solution of $dz/dh=0$; we do \emph{not} assert
that it is the right endpoint of $\mathrm{supp}(\mu_\tau)$ for all $\tau$.
For $\tau>1$ the critical value $h_*=\tau/(\tau-1)>1$ lies on the physical
branch, and $x_*(\tau)$ is the upper edge of the continuous spectrum
(numerically, for $\tau=1.7$ the density terminates at $x_*\approx0.844$).
For $0<\tau<1$ one has $h_*=\tau/(\tau-1)<0$, off the physical branch, so
$x_*(\tau)<1$ marks an interior branch point rather than the support edge:
the continuous part of $\mu_\tau$ extends from $x_*(\tau)$ up to the atom of
mass $1-\tau$ at $x=1$. At $\tau=1$ the two features collide: $h_*\to\infty$,
$x_*(1)=1$, the branch point reaches the hard edge $1$, and the atom
disappears. The transition at $\tau=1$ is thus governed by the collision of
the branch point with the spectral edge at $1$.

\begin{proposition}[Atom and branch point]\label{prop:spectral-structure}
The limiting law $\mu_\tau$ has an atom at $1$ of mass
\[
  \mu_\tau(\{1\})=(1-\tau)_+.
\]
Consequently, the mass away from the atom at $1$ is $\min\{1,\tau\}$. Moreover,
the Stieltjes-transform equation has a real branch point at
\[
  x_*(\tau)=\tau \eur^{1-\tau}.
\]
At $\tau=1$ this branch point reaches $1$, precisely where the atom at $1$
disappears.

\begin{remark}
Beenakker \cite{Beenakker2501} computes the $\tau=1$ moments for the
\emph{orthogonal} version of the model, in which
$Q_N=\prod_{n=1}^N\bigl(I-u^{(n)}(u^{(n)})^\top\bigr)$ is a product of $N$
real rank-$(N-1)$ projectors, and obtains
\[
  \lim_{N\to\infty}\frac1N\,\E\bigl[\Tr(Q_NQ_N^\top)^p\bigr]
  =\frac{(p/\eur)^p}{p!}.
\]
This agrees exactly with the law identified here: setting $\tau=1$ in
\eqref{eq:moment-formula} and using $G_p(1)=p^{p-1}$ (a short induction,
or Cayley's formula for the number of labelled rooted trees on $p$
vertices) gives
\[
  m_p(1)=\eur^{-p}\frac{p^{p-1}}{(p-1)!}
  =\eur^{-p}\frac{p^p}{p!}
  =\frac{(p/\eur)^p}{p!}.
\]
Our rigorous results are stated for the complex (unitary) ensemble, whereas
Beenakker's computation is for the real (orthogonal) one; the two limiting
moment sequences coincide because the leading-order free limit does not
depend on the symmetry class, the difference appearing only in the $1/N$
corrections. Beenakker further notes, following a question of Bulatov, that
at $\tau=1$ the squared singular values are \emph{approximately} distributed
as $\eur^{-\pi X^2}$ with $X$ standard normal, and asks for a direct
explanation. The measure $\mu_1$ identified here is the exact limiting law:
the approximate Gaussian form is its Stirling approximation, with the factor
$\pi$ originating from Stirling's constant in
$(p-1)!\sim\sqrt{2\pi p}\,(p/\eur)^{p-1}$.
\end{remark}

\end{proposition}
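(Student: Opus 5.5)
The plan is to assemble Proposition~\ref{prop:spectral-structure} from three ingredients: the explicit moment formula \eqref{eq:moment-formula}, the moment characterization of atoms at the right endpoint of $[0,1]$, and the algebraic form of the Stieltjes-transform equation $z=h\eur^{-\tau/h}/(h-1)$.

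\textbf{The atom.} Since $\mu_\tau$ is supported on $[0,1]$ (Theorem~\ref{thm:free-small-loss}), dominated convergence gives $\int x^p\,d\mu_\tau\to\mu_\tau(\{1\})$. So it suffices to compute $\lim_p m_p(\tau)$. I would rewrite \eqref{eq:moment-formula} probabilistically: with $K_p\sim\mathrm{Poisson}(p\tau)$,
\[
m_p(\tau)=\E\bigl[(1-K_p/p)\mathbf 1_{\{K_p\le p-1\}}\bigr].
\]
The integrand lies in $[0,1]$ and $K_p/p\to\tau$ in probability by Chebyshev, since $\Var(K_p/p)=\tau/p$. By bounded convergence the limit is $(1-\tau)\mathbf 1_{\{\tau<1\}}$ when $\tau\neq1$. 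Values of $K_p/p$ near the threshold $1$ only matter when $\tau=1$. In that case I would bound $m_p(1)\le\E|1-K_p/p|\le p^{-1/2}$ by Cauchy--Schwarz. This gives $\mu_\tau(\{1\})=(1-\tau)_+$, and the remaining mass is $1-(1-\tau)_+=\min\{1,\tau\}$.

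\textbf{The branch point.} I would start from the relation $\psi^{-1}_{\mu_\tau}(w)=\frac{w}{1+w}S_{\mu_\tau}(w)$ and $h=zG_\tau(z)=1+\psi_{\mu_\tau}(1/z)$. These yield $z(h)=h\eur^{-\tau/h}/(h-1)$. The inverse map $z\mapsto h$ fails to be locally analytic exactly where $z'(h)=0$. Logarithmic differentiation reduces this condition to $(\tau-1)h=\tau$, so $h_*=\tau/(\tau-1)$. Substituting back gives
\[
z(h_*)=\frac{\tau}{\tau-1}\,\eur^{-(\tau-1)}\cdot(\tau-1)=\tau\eur^{1-\tau}.
\]
To see that the critical point is genuine, I would also check $z''(h_*)\ne0$, so that two branches coalesce there.

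\textbf{Coincidence at $\tau=1$.} The function $\tau\mapsto\tau\eur^{1-\tau}$ is continuous, has maximum value $1$, and attains it only at $\tau=1$. That is the same parameter value at which $(1-\tau)_+$ first vanishes, which gives the final sentence of the proposition.

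\textbf{Main obstacle.} The delicate point is not the computations but interpreting the branch point correctly. The proposition only asserts a critical point of the implicit equation, so I would state it in exactly that form. I would avoid claiming it is a support edge for all $\tau$, since for $\tau<1$ we have $h_*<0$, which lies off the physical branch.
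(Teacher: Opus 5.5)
Your proof of the proposition's claims is correct and is essentially the paper's: the Poisson representation of \eqref{eq:moment-formula} with a separate Cauchy--Schwarz bound at $\tau=1$, then logarithmic differentiation of $z(h)$, then continuity of $\tau\eur^{1-\tau}$.

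Your variations are small improvements:
\begin{itemize}
\item Chebyshev with bounded convergence suffices where the paper invokes Chernoff bounds. In fact $(1-K_p/p)\mathbf 1_{\{K_p\le p-1\}}=(1-K_p/p)_+$ is a bounded continuous function of $K_p/p$, so the same argument also covers $\tau=1$.
\item Your check $z''(h_*)=\eur^{1-\tau}(\tau-1)^4/\tau\neq0$ for $\tau\ne1$, which the paper omits, confirms that two branches genuinely coalesce.
\item At $\tau=1$ there is no finite $h_*$. In the variable $u=1/h$, the map $z=\eur^{-\tau u}/(1-u)$ has the critical point $u_*=1-1/\tau$ for every $\tau>0$, which handles $\tau=1$ directly.
\item You leave the embedded remark aside. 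Its checkable claim $G_p(1)=p^{p-1}$ is a telescoping sum, since $(1-k/p)p^k/k!=p^k/k!-p^{k-1}/(k-1)!$ for $k\ge1$.
\end{itemize}

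The reason in your closing caveat is false, and the paper's discussion before the proposition makes the same assertion. For $0<\tau<1$, $h_*<0$ does lie on the physical branch. The point $x_*(\tau)$ is the upper edge of the continuous part of $\mu_\tau$, and $(x_*(\tau),1)$ is a spectral gap below the atom. Your own Poisson representation shows this. Since $\E[K_p/p]=\tau$,
\[
  \int_{[0,1)}x^p\,d\mu_\tau(x)=m_p(\tau)-(1-\tau)=\E\bigl[(K_p/p-1)_+\bigr]
  \le\frac{\tau}{(1-\tau)^2}\,\frac{\Pr(K_p=p)}{p}
  \le\frac{\tau}{(1-\tau)^2}\,\frac{x_*(\tau)^p}{p}.
\]
The first inequality uses $\Pr(K_p=k+1)<\tau\Pr(K_p=k)$ for $k\ge p$; the second uses $p!\ge(p/\eur)^p$.
\begin{itemize}
\item The upper bound forces $\mu_\tau((x_*(\tau),1))=0$.
\item The lower bound $\frac1p\Pr(K_p=p+1)$ has $p$-th root tending to $x_*(\tau)$, so $x_*(\tau)$ lies in the support.
\item On the gap, $h=zG_\tau(z)$ is real and tends to $-\infty$ as $z\uparrow1$ because of the atom. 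So $h$ runs along the real branch $(-\infty,h_*)$ and reaches $h_*$ at $z=x_*(\tau)$.
\end{itemize}
This does not affect your proof, since the proposition asserts only a critical point, but you should drop that justification. The correct picture is that $x_*(\tau)$ is the upper edge of the continuous spectrum for every $\tau\neq1$. The transition at $\tau=1$ is the closing of the gap between that edge and the atom.
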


%-----------------------------------------------------------------------
\section{Charge-transfer cumulants}\label{sec:cumulants}
%-----------------------------------------------------------------------

The transmission eigenvalues are the eigenvalues
\[
  T_1,\ldots,T_N
\]
of $B_L^\dagger B_L$. Thus the limiting measure $\mu_\tau$ is already the
limiting empirical distribution of the transmission eigenvalues.

The full counting statistics of transmitted charge has generating function
\[
  \chi_N(\theta)
  =
  \prod_{k=1}^N
  \left(1+(\eur^{i\theta}-1)T_k\right).
\]
Assuming convergence of the empirical transmission-eigenvalue distribution to
$\mu_\tau$, we obtain the limiting cumulant generating function per channel
\[
  F_\tau(\theta)
  =
  \int_0^1
  \log\left(1+(\eur^{i\theta}-1)t\right)
  \,d\mu_\tau(t).
\]

Let
\[
  m_p(\tau)=\int_0^1 t^p\,d\mu_\tau(t).
\]
Writing $y=i\theta$, the cumulants per channel are defined by
\[
  F_\tau(-iy)=\sum_{r\ge 1}\kappa_r(\tau)\frac{y^r}{r!}.
\]
Since
\[
  \log(1+(\eur^y-1)t)
  =
  \sum_{k\ge 1}
  \frac{(-1)^{k+1}}{k}
  (\eur^y-1)^k t^k,
\]
we get
\[
  F_\tau(-iy)
  =
  \sum_{k\ge 1}
  \frac{(-1)^{k+1}}{k}
  m_k(\tau)(\eur^y-1)^k.
\]
Using
\[
  (\eur^y-1)^k
  =
  k!\sum_{r\ge k} S(r,k)\frac{y^r}{r!},
\]
where $S(r,k)$ are Stirling numbers of the second kind, we obtain
\[
  \kappa_r(\tau)
  =
  \sum_{k=1}^r
  (-1)^{k+1}(k-1)!S(r,k)m_k(\tau).
\]

\begin{proposition}[First charge cumulants]\label{prop:first-cumulants}
The first three charge-transfer cumulants per channel are
\[
  \kappa_1(\tau)=\eur^{-\tau},
\]
\[
  \kappa_2(\tau)=\left(\eur^\tau-1-\tau\right)\eur^{-2\tau},
\]
and
\[
  \kappa_3(\tau)
  =
  \left(
    \eur^{2\tau}
    -3(1+\tau)\eur^\tau
    +2+4\tau+3\tau^2
  \right)\eur^{-3\tau}.
\]
\end{proposition}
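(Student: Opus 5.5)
The plan is to substitute the explicit moments into the cumulant formula
\[
  \kappa_r(\tau)=\sum_{k=1}^r(-1)^{k+1}(k-1)!\,S(r,k)\,m_k(\tau),
\]
which was derived just above. For the moments we use \eqref{eq:moment-formula}, namely $m_1=\eur^{-\tau}$, $m_2=(1+\tau)\eur^{-2\tau}$ and $m_3=(1+2\tau+\tfrac32\tau^2)\eur^{-3\tau}$. The only combinatorial input is the Stirling numbers of the second kind for $r\le 3$:
\[
  S(1,1)=1,\quad S(2,1)=S(2,2)=1,\quad S(3,1)=1,\ S(3,2)=3,\ S(3,3)=1.
\]
With these, the formula specializes to
\[
  \kappa_1=m_1,\qquad \kappa_2=m_1-m_2,\qquad \kappa_3=m_1-3m_2+2m_3.
\]
Before relying on the general formula, I would sanity-check these against the direct expansion of $\log(1+(\eur^y-1)t)$ to third order in $y$, by comparing the coefficients of $y$, $y^2/2$ and $y^3/6$. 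This verifies the signs and the factors $(k-1)!$.

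Next I substitute the moments term by term.

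\textbf{First cumulant.} We get $\kappa_1=\eur^{-\tau}$ immediately.

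\textbf{Second cumulant.} Here $\kappa_2=\eur^{-\tau}-(1+\tau)\eur^{-2\tau}$. Factoring out $\eur^{-2\tau}$ gives $(\eur^{\tau}-1-\tau)\eur^{-2\tau}$.

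\textbf{Third cumulant.} Factor out $\eur^{-3\tau}$ throughout:
\[
  \kappa_3=\eur^{-3\tau}\bigl(\eur^{2\tau}-3(1+\tau)\eur^{\tau}+2(1+2\tau+\tfrac32\tau^2)\bigr).
\]
The last bracket simplifies as $2+4\tau+3\tau^2$, which is the claimed expression.

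As a consistency check, I would expand $\kappa_2$ and $\kappa_3$ near $\tau=0$. Both should vanish at $\tau=0$, since $\mu_0=\delta_1$ gives a deterministic transfer of one charge per channel, so the higher cumulants are zero.

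There is no real obstacle beyond bookkeeping. The step most prone to error is the third cumulant: getting the coefficients $1,-3,+2$ right from $(k-1)!\,S(3,k)$ with alternating signs, and then simplifying the $m_3$ term. For that reason I would cross-check it with the direct series expansion. Everything else is a one-line substitution from earlier results.
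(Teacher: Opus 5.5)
Your proposal is correct and is essentially the paper's proof: specialize $\kappa_r=\sum_{k}(-1)^{k+1}(k-1)!\,S(r,k)\,m_k$ to $r\le 3$ to get $\kappa_1=m_1$, $\kappa_2=m_1-m_2$, $\kappa_3=m_1-3m_2+2m_3$, then substitute the explicit moments from \eqref{eq:moment-formula}. Your sanity checks go beyond the paper but are both sound: the Bernoulli cumulants $t$, $t(1-t)$, $t-3t^2+2t^3$ integrated against $\mu_\tau$, and the vanishing of $\kappa_2,\kappa_3$ at $\tau=0$.
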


\begin{proof}
The general cumulant formula is
\[
  \kappa_r(\tau)
  =
  \sum_{k=1}^r
  (-1)^{k+1}(k-1)!S(r,k)m_k(\tau).
\]
For $r=1,2,3$, this gives
\[
  \kappa_1=m_1,
\]
\[
  \kappa_2=m_1-m_2,
\]
and
\[
  \kappa_3=m_1-3m_2+2m_3.
\]
Using
\[
  m_1=\eur^{-\tau},
  \qquad
  m_2=(1+\tau)\eur^{-2\tau},
\]
and
\[
  m_3=\left(1+2\tau+\frac32\tau^2\right)\eur^{-3\tau},
\]
we obtain the stated formulas.
\end{proof}

The Fano factor is therefore
\[
  F(\tau)=\frac{\kappa_2(\tau)}{\kappa_1(\tau)}
  =
  1-(1+\tau)\eur^{-\tau}.
\]
It satisfies
\[
  F(0)=0,
  \qquad
  \lim_{\tau\to\infty}F(\tau)=1,
\]
and is monotone increasing since
\[
  F'(\tau)=\tau \eur^{-\tau}\ge 0.
\]
At the critical value $\tau=1$,
\[
  F(1)=1-\frac{2}{\eur}.
\]

%-----------------------------------------------------------------------
\section{Diagonal scaling: evidence and remaining problem}\label{sec:diagonal}
%-----------------------------------------------------------------------

The results above prove the fixed-$L$ random-matrix limit and the free
small-loss limit. The monitored-transport scaling suggested by the physical
model is instead diagonal:
\[
  L=L_N\sim \tau N,
  \qquad
  c_N=\frac1N,
\]
so that the total monitoring strength satisfies $L_Nc_N\to\tau$. In this
regime $P$ has rank $N-1$, and it is natural to write
\[
  S^\dagger P S=I-uu^\dagger,
\]
where $u$ is Haar-uniform on the unit sphere in $\C^N$.

The free-probability computation predicts Conjecture~\ref{conj:diagonal}. We
record low-order evidence for this conjecture. For this section it is slightly
cleaner to use
\[
  C_n=B_nB_n^\dagger
\]
instead of $B_n^\dagger B_n$. These two matrices have the same eigenvalues,
and hence the same normalized trace moments. With
\[
  Q_{n+1}=S_{n+1}^\dagger P S_{n+1},
\]
one has
\[
  C_{n+1}=P S_{n+1}C_nS_{n+1}^\dagger P,
\]
and, by cyclicity of trace,
\[
  \Tr(C_{n+1}^p)=\Tr\bigl((C_nQ_{n+1})^p\bigr)
\]
for $p=1,2,3$.

For the first moment, Haar invariance gives
\[
  \E[Q_{n+1}]=\frac{\Tr P}{N}I=\left(1-\frac1N\right)I.
\]
Thus, if $M_1^{(N)}(n)=\E N^{-1}\Tr C_n$, then
\[
  M_1^{(N)}(n+1)=\left(1-\frac1N\right)M_1^{(N)}(n),
  \qquad M_1^{(N)}(0)=1.
\]
Therefore, whenever $L/N\to\tau$,
\[
  M_1^{(N)}(L)=\left(1-\frac1N\right)^L\longrightarrow \eur^{-\tau}=m_1(\tau).
\]

For the second moment we use the following elementary identity. If
$Q=S^\dagger P S$ and $P$ has rank $M$, then for every deterministic matrix
$A$,
\[
  \E_Q\Tr(AQAQ)
  =\alpha_{N,M}\Tr A^2+\beta_{N,M}(\Tr A)^2,
\]
where
\[
  \alpha_{N,M}=\frac{M(MN-1)}{N(N^2-1)},
  \qquad
  \beta_{N,M}=\frac{M(N-M)}{N(N^2-1)}.
\]
Indeed, by unitary invariance, $\E[Q\otimes Q]$ is a linear combination of
$I$ and the flip operator; evaluating the ordinary and flipped traces gives
the displayed coefficients.

For $M=N-1$ this becomes
\[
  \alpha_N=\frac{N^2-N-1}{N(N+1)}=1-\frac{2N+1}{N(N+1)},
  \qquad
  \beta_N=\frac1{N(N+1)}.
\]
Set
\[
  M_2^{(N)}(n)=\E\frac1N\Tr C_n^2,
  \qquad
  R_1^{(N)}(n)=\E\left(\frac1N\Tr C_n\right)^2.
\]
Then
\[
  M_2^{(N)}(n+1)=\alpha_NM_2^{(N)}(n)+\frac1{N+1}R_1^{(N)}(n),
\]
and the same identity applied to $(N^{-1}\Tr(C_nQ_{n+1}))^2$ gives
\[
  R_1^{(N)}(n+1)=\alpha_NR_1^{(N)}(n)
  +\frac1{N^2(N+1)}M_2^{(N)}(n).
\]
Since $0\le C_n\le I$, we have $0\le M_2^{(N)}(n)\le 1$. Iterating the
last recursion gives
\[
  R_1^{(N)}(n)=\alpha_N^n+
  \sum_{j=0}^{n-1}\alpha_N^{n-1-j}O(N^{-3}),
\]
and therefore, uniformly for $n\le TN$,
\[
  R_1^{(N)}(n)=\alpha_N^n+O_T(N^{-2}).
\]
Solving the recursion for $M_2^{(N)}$ gives, uniformly for $L\le TN$,
\[
  M_2^{(N)}(L)
  =\alpha_N^L+\frac{L}{N+1}\alpha_N^{L-1}+o(1).
\]
Thus, if $L/N\to\tau$,
\[
  M_2^{(N)}(L)\longrightarrow (1+\tau)e^{-2\tau}=m_2(\tau).
\]

The third moment already shows why the general diagonal problem is not a
closed scalar moment recursion. Since $Q=I-uu^\dagger$, expanding
\[
  \Tr(CQCQCQ)
\]
gives
\[
  \Tr(CQCQCQ)=\Tr C^3-3u^\dagger C^3u
  +3(u^\dagger Cu)(u^\dagger C^2u)-(u^\dagger Cu)^3.
\]
Consequently the evolution of $\E N^{-1}\Tr C_n^3$ couples to
\[
  \E\left[\left(\frac1N\Tr C_n\right)
  \left(\frac1N\Tr C_n^2\right)\right]
  \quad\text{and}\quad
  \E\left[\left(\frac1N\Tr C_n\right)^3\right].
\]
If these normalized trace products factorize in the diagonal limit, the
limiting third moment solves
\[
  m_3'(t)=-3m_3(t)+3m_1(t)m_2(t)-m_1(t)^3,
\]
with $m_3(0)=1$, and hence
\[
  m_3(t)=\left(1+2t+\frac32t^2\right)\eur^{-3t},
\]
in agreement with the moment formula of Section~\ref{sec:erlang}. This is a
consistency check rather than a proof of the third diagonal moment, because
the required trace-product factorization remains open.

Thus the remaining problem is to prove asymptotic factorization of normalized
trace products. Equivalently, for each fixed composition
$\alpha=(\alpha_1,\ldots,\alpha_r)$ one wants
\[
  \E\prod_{j=1}^r\frac1N\Tr(C_L^{\alpha_j})
  \longrightarrow
  \prod_{j=1}^r m_{\alpha_j}(\tau)
\]
whenever $L/N\to\tau$. This is the quantitative form of the diagonal
random-matrix problem left open here.

We remark that the diagonal regime, in which the rank of $P$ differs from
$N$ by a bounded amount and the number of factors grows with $N$, is
naturally a problem in \emph{finite} free probability: the single-step map
is a finite free multiplicative convolution acting on the characteristic
polynomial of $C_n$, and Conjecture~\ref{conj:diagonal} asks whether the
$L$-fold iterate of this finite operation converges to the free
multiplicative semigroup $\mu_\tau$ as $L\sim\tau N$.  The finite free
convolution framework of Marcus, Spielman, and Srivastava
\cite{MarcusSpielmanSrivastava2022} and Gorin and Marcus
\cite{GorinMarcus2020} provides tools tailored to exactly this
finite-$N$ regime, and may offer a route to the required factorization.

%-----------------------------------------------------------------------
\section*{Acknowledgements}

The author thanks Carlo Beenakker for a short but helpful correspondence about
monitored quantum transport and the moment formulas that motivated this note.

%-----------------------------------------------------------------------

\end{document}